\documentclass[10pt]{article}

\usepackage{amsmath,amssymb,amsthm}
\usepackage{enumitem}
\usepackage{mathtools}
\usepackage[left=4cm,right=3cm,top=3cm,bottom=3cm]{geometry}
\usepackage{listings}
\usepackage{hyperref}
\usepackage{amsrefs}

\newcommand{\NN}{\ensuremath{\mathbb N}}

\newcommand{\cA}{\mathcal{A}}
\newcommand{\cG}{\mathcal{G}}

\newcommand{\cV}{\mathcal{V}}
\newcommand{\cW}{\mathcal{W}}
\newcommand{\cB}{\mathcal{B}}

\newcommand{\floor}[1]{\mbox{$\left\lfloor #1 \right\rfloor$}}

\newtheorem{thm}{Theorem}
\newtheorem{lem}[thm]{Lemma}

\newtheorem{claim}[thm]{Claim}
\newtheorem{cor}[thm]{Corollary}

\begin{document}
  \title{On the Thickness of Infinite Generalized Sidon Sets, I}
  \author{Kevin O'Bryant\thanks{Email: \texttt{kevin.obryant@csi.cuny.edu}.\\ 2020 Mathematics Subject Classification: 05B10, 11B83, 11B05.}}
  \date{\today}
  \maketitle

\begin{abstract}
  Let $g \ge1$. A set $\cA$ of nonnegative integers is a Sidon set if for each $d>0$ there is at most one pair $(a,b)\in\cA\times\cA$ with $d=a-b$. If there are at most $g$ pairs, then $\cA$ is a $g$-Golomb ruler. We prove that if $\cA$ is a $g$-Golomb ruler, then
  \[\liminf_{n\to\infty} \frac{|\cA\cap[0,n)|}{\sqrt{n/\log n}} \le \frac{2\sqrt g }{\sqrt{\log 2}},\]
  generalizing and sharpening results of Erdős and Cilleruelo.
  There is a $g$-Golomb ruler $\cG$ with
  \[\frac{\sqrt g }{\sqrt2} \le \limsup_{n\to\infty} \frac{|\cG\cap[0,n)|}{\sqrt n} \le \sqrt{g } ,\]
  generalizing a result of Kr\"uckeberg.
\end{abstract}

\section{Introduction}
Let $g \ge1$. A set $\cA$ of nonnegative integers is a $g$-Golomb ruler if each $d>0$ has at most $g$ pairs $(a,b)\in\cA\times\cA$ with $d=a-b$. With $g =1$, these are known as Sidon sets in combinatorics, Golomb rulers in recreational math, Babcock sets in electrical engineering, and $B_2$ sequences in number theory. See the author's comprehensive annotated bibliography~\cite{2004.Obryant} for hundreds of citations.

Our main result is giving the constant in the following theorem. Erd\H{o}s~\cite{1955.Stohr} proved finiteness for Sidon sets, and Cilleruelo~\cite{2015.Cilleruelo-a} proved $8\sqrt{7} \approx 21.2$, and suggests bringing the constant down to $4$ as an exercise. We bring the constant down to $2/\sqrt{\log2} \approx 2.4$. The extension from Sidon sets to $g$-Golomb rulers is new, but easy.
\begin{thm}\label{thm:liminf}
    Let $g$ be a positive integer. 
    If $\cA$ is a $g$-Golomb ruler, then
    \[\liminf_{n\to \infty} \frac{A(n)}{\sqrt{n/\log(n)}} \le \frac{2}{\sqrt{\log 2}} \,\sqrt g .\]
\end{thm}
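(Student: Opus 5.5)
The plan is to argue by contradiction, with a weighted second-moment count of differences over windows whose length is proportional to their position. Suppose that for some $c>2\sqrt g/\sqrt{\log 2}$ we have $A(x)\ge c\sqrt{x/\log x}$ for all $x\ge x_0$. For real $x\ge1$ put $W(x)=|\cA\cap[x,2x)|$. The quantity to bound two ways is
\[
S(N)=\int_{x_0}^{N} \frac{W(x)^2}{x^2}\,dx .
\]
Dyadic blocks and windows of fixed length are not enough. Fixed-length windows lose the logarithm entirely, and with dyadic blocks the minimization of $\sum a_j^2$ under lower bounds on the tails gains only a factor $c^2N/\log N$. A continuous family of dilated windows is what makes every scale contribute comparably.

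\emph{Upper bound.} I would expand $W(x)^2=W(x)+2\#\{(a,b)\in\cA^2: x\le b<a<2x\}$. A pair $b<a$ is counted exactly for $x\in(a/2,b]$, which is nonempty only when $d=a-b<b$. Its weight is therefore
\[
\int_{a/2}^{b}x^{-2}\,dx=\frac{2}{a}-\frac1b=\frac{b-d}{ab}\le \frac1b .
\]
Grouping the pairs by their difference $d$, each $d$ carries at most $g$ pairs, and I would sum the weights over $d$ using that constraint. The aim is to show $2\sum_{\text{pairs}}\tfrac{b-d}{ab}\le g\log N+O(1)$, possibly after restricting to $b\le N$ and controlling the boundary. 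The diagonal term is $\int W(x)x^{-2}\,dx=O(\log N)\cdot o(1)$, since $W(x)=o(x)$ for a $g$-Golomb ruler. I expect the sharp form of this estimate to be the main obstacle. The crude bound $\sum 1/b$ over $b\in\cA$, with multiplicities, is not enough. What is needed is the exact weight $(b-d)/(ab)$ combined with a rearrangement argument: given at most $g$ pairs per difference, the weighted sum is maximized when the small differences are attached to the largest possible $b$. My first attempt would be to bound $\sum_{d<b}\tfrac{b-d}{b(b+d)}$-type sums by an integral, which yields $\log 2$-flavoured constants.

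\emph{Lower bound.} By Cauchy--Schwarz,
\[
S(N)\ge\frac{\bigl(\int_{x_0}^N W(x)x^{-3/2}\,dx\bigr)^2}{\int_{x_0}^N x^{-1}\,dx}.
\]
I would compute the numerator by swapping sum and integral: each $a\in\cA$ contributes $\int_{a/2}^{a}x^{-3/2}\,dx=(2\sqrt2-2)a^{-1/2}$. Partial summation then turns $\sum_{a\le N}a^{-1/2}$ into $\int A(t)\,t^{-3/2}\,dt$, up to lower-order terms. Inserting the hypothesis $A(t)\ge c\sqrt{t/\log t}$ gives a lower bound of order $c^2\log N$ times an explicit constant. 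The weights $x^{-2}$ and $x^{-3/2}$ are a first guess. If the constants do not come out to exactly $2\sqrt g/\sqrt{\log 2}$, I would replace $[x,2x)$ by $[x,\lambda x)$ and the power weights by $x^{-\alpha}$, and optimize over $\lambda$ and $\alpha$. Comparing the two bounds as $N\to\infty$ then contradicts $c>2\sqrt g/\sqrt{\log 2}$. Tuning the window and weight so that the Cauchy--Schwarz step is nearly tight, which requires $W(x)x^{-1/2}$ to be roughly constant, is the second place where I expect difficulty in hitting the stated constant.
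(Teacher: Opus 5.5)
There is a genuine gap, and it is structural, so tuning $\lambda$ and $\alpha$ will not fix it. Finish your own lower-bound computation. Partial summation with $A(t)\ge c\sqrt{t/\log t}$ gives $\sum_{a\le N}a^{-1/2}\approx\frac c2\int^N\frac{dt}{t\sqrt{\log t}}=c\sqrt{\log N}$, so your numerator is about $(2\sqrt2-2)^2c^2\log N$. But you then divide by $\int_{x_0}^N x^{-1}\,dx\sim\log N$. Your Cauchy--Schwarz bound is therefore only $S(N)\ge(2\sqrt2-2)^2c^2+o(1)$, a constant.

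The Golomb upper bound, by contrast, is of order $g\log N$. Your per-difference argument gives at most $g\max_{b>d}\frac{b-d}{b(b+d)}=(3-2\sqrt2)g/d$ for each $d<N$. That totals $2(3-2\sqrt2)g\log N+O(g)$, and a per-difference bound cannot do better.

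No choice of weights closes this. A set with $A(x)\approx c\sqrt{x/\log x}$ satisfies your hypothesis and has $W(x)^2/x^2\asymp c^2/(x\log x)$. So $S(N)\asymp c^2\log\log N=o(\log N)$, and no lower bound derived from the hypothesis can reach the upper bound. The same loss of a factor $\log x$ occurs for every window $[x,\lambda x)$ and weight $x^{-\alpha}$. A window at scale $x$ holds about $x/\log x$ pairs but draws on about $g(\lambda-1)x$ available differences, so in a scale-invariant family the $g$-Golomb condition is never close to binding.

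The missing idea is the one you dismissed: windows of a single fixed length $N$, at every position from $0$ up to $MN$ with $M=N/\log N$. Every pair inside any such window has difference less than $N$. So pairs located anywhere in $[0,(M+1)N)$ compete for the same pool of at most $g(N-1)$ differences. Averaging over the offset $t\in[0,N)$ then gives some offset with $\sum_{\ell\le M}\binom{F_\ell}{2}\le gN/2$.

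On the other side, the hypothesis heuristically gives $F_\ell\approx\frac c2\sqrt{N/(\ell\log\ell N)}$. So use Cauchy--Schwarz with $w_\ell=(\ell\log\ell N)^{-1/2}$, together with Abel summation. Abel summation works here because $w_\ell$ decreases, so only lower bounds on $A$ are needed. This gives $\sum_\ell F_\ell^2\ge c^2\frac{\log2}{4}N(1+o(1))$.

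The $\log 2$ is $\log\frac{\log MN}{\log N}+o(1)$: the window positions span $\log$-scales from $\log N$ to $2\log N$. You cannot push $MN$ much beyond $N^2$, because the diagonal term $\sum_\ell F_\ell\le A((M+1)N)$ must stay $o(N)$. Comparing with $\sum_\ell F_\ell^2\le gN+o(N)$ gives $c\le 2\sqrt g/\sqrt{\log 2}$. So fixed-length windows do not lose the logarithm; they are exactly what produces it.
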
 
\begin{cor}
  Let $g$ be a positive integer, and $\cA=\{0\le a_1<a_2<\dots\}$ an infinite $g$-Golomb ruler. Then
    \[\limsup_{n\to \infty} \frac{a_n}{n^2\log n} \ge \frac{\log 2}{2g}.
    \]
\end{cor}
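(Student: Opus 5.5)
We derive the Corollary from Theorem~\ref{thm:liminf} by inverting the counting function $A(n)=|\cA\cap[0,n)|$. Suppose, for contradiction, that
\[\limsup_{n\to\infty}\frac{a_n}{n^2\log n}<\frac{\log 2}{2g}.\]
Then there are $c<\frac{\log 2}{2g}$ and $N_0$ with $a_k\le c\,k^2\log k$ for all $k\ge N_0$. We will show this forces $A(x)$ to be large for every large $x$, which contradicts the theorem.

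Fix a large real $x$ and let $k$ be the largest integer with $c\,k^2\log k< x$; since $\cA$ is infinite and $c\,k^2\log k\to\infty$, $k\to\infty$ as $x\to\infty$. For every $j$ with $N_0\le j\le k$ we have $a_j\le c\,j^2\log j\le c\,k^2\log k<x$. Hence $a_1<\dots<a_k$ all lie in $[0,x)$ (the finitely many $j<N_0$ are below $a_{N_0}$), so $A(x)\ge k$. By maximality of $k$, $x\le c(k+1)^2\log(k+1)$. Taking logarithms gives $\log x\le 2\log k+O(\log\log k)$, so $\log x\sim 2\log k$. Consequently
\[\frac{x}{\log x}\le \frac{c(k+1)^2\log(k+1)}{(2+o(1))\log k}=\Bigl(\frac c2+o(1)\Bigr)k^2,\]
so
\[\frac{A(x)}{\sqrt{x/\log x}}\ge \frac{k}{\sqrt{(c/2+o(1))k^2}}\to \sqrt{\frac 2c}.\]

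Since this holds for all large $x$, and in particular along integers $n$, we get $\liminf_n A(n)/\sqrt{n/\log n}\ge \sqrt{2/c}$. But $c<\frac{\log2}{2g}$ gives $\sqrt{2/c}>\sqrt{4g/\log 2}=\frac{2}{\sqrt{\log 2}}\sqrt g$, contradicting Theorem~\ref{thm:liminf}. The only delicate point is the asymptotic inversion, namely verifying $\log x\sim 2\log k$ so that the factor $2$ appears correctly. The boundary case, equality in the $\limsup$, needs no separate treatment: the contradiction argument only assumed strict inequality, so it already rules out every $\limsup$ below $\frac{\log 2}{2g}$.
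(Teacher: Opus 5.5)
Your argument is correct and is exactly the intended one: the paper gives no separate proof of this corollary, treating it as the direct inversion of Theorem~\ref{thm:liminf} from $A(n)$ to $a_n$, and your contradiction argument carries this out with the right constant, since $c<\frac{\log 2}{2g}\iff\sqrt{2/c}>2\sqrt{g/\log 2}$. One small tidy-up: the direction of $\log x\sim 2\log k$ you actually use (it sits in the denominator) is the lower bound $\log x>\log c+2\log k+\log\log k$, which comes from $x>ck^2\log k$ rather than from the upper bound you displayed, but it is equally immediate.
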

There is a counterpart to Theorem~\ref{thm:liminf} that also originates with Erd\H{o}s~\cite{1955.Stohr}, and was improved by Kr\"uckeberg~\cite{1961.Kruckeberg}. We extend Kr\"uckeberg's result to $g$-Golomb rulers in Theorem~\ref{thm:limsup}.
\begin{thm}\label{thm:limsup}
    Every $g$-Golomb ruler $\cA$ has
    \[\limsup_{n\to\infty} \frac{|\cA \cap [0,n)|}{\sqrt{n}} \le \sqrt{g } .\]
    There is a $g$-Golomb ruler $\cG$ with 
    \[\limsup_{n\to\infty} \frac{|\cG \cap [0,n)|}{\sqrt{n}} \ge \frac{1}{\sqrt2}\,\sqrt g .\]
\end{thm}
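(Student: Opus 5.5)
The first inequality will come from an Erd\H{o}s--Tur\'an window-counting argument. The second will be reduced to Kr\"uckeberg's theorem for Sidon sets by a blow-up construction.

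\emph{Upper bound.} Fix $n$ and let $k=|\cA\cap[0,n)|$. Write $r(d)$ for the number of pairs $(a,b)$ with $a,b\in\cA\cap[0,n)$ and $a-b=d$, so that $r(d)\le g$.

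First, a crude bound. Each of the $k(k-1)/2$ positive differences lies in $[1,n-1]$, and each value is taken at most $g$ times. Hence $k(k-1)/2 \le g(n-1)$, which gives $k\le \sqrt{2gn}+1$.

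To sharpen this, fix a window length $u$. For each integer $x\in(-u,n)$ let $c_x=|\cA\cap[x,x+u)\cap[0,n)|$. Every element of $\cA\cap[0,n)$ lies in exactly $u$ windows, so $\sum_x c_x = ku$. A pair $b<a$ with $a-b=d<u$ lies in exactly $u-d$ windows. Therefore
\[ \sum_x \binom{c_x}{2} = \sum_{0<d<u}(u-d)\,r(d) \le g\binom{u}{2}. \]
There are $n+u-1$ windows, so Cauchy--Schwarz gives $\sum_x c_x^2 \ge k^2u^2/(n+u)$. Combining,
\[ \frac{k^2u^2}{n+u} - ku \le g\,u(u-1), \qquad\text{hence}\qquad k^2 \le (n+u)\Bigl(g+\frac{k}{u}\Bigr). \]
Now take $u=\lfloor n^{3/4}\rfloor$. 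The crude bound makes $k/u=O(n^{-1/4})$, so $k\le \sqrt{gn}\,(1+o(1))$. This yields $\limsup_n |\cA\cap[0,n)|/\sqrt n\le\sqrt g$.

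\emph{Lower bound.} Take a Sidon set $\mathcal{T}$ with $\limsup_n |\mathcal{T}\cap[0,n)|/\sqrt n \ge 1/\sqrt2$; this is Kr\"uckeberg's theorem~\cite{1961.Kruckeberg}. Set
\[ \cG=\{\,g t+i : t\in\mathcal{T},\ 0\le i<g\,\}. \]

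To see that $\cG$ is a $g$-Golomb ruler, fix $d>0$ and write $d=ge+r$ with $0\le r<g$. A pair $(gt+i,\,gt'+i')$ has difference $d$ exactly when one of two things happens.
\begin{itemize}
\item $i-i'=r$ and $t-t'=e$. There are $g-r$ choices of $(i,i')$.
\item $i-i'=r-g$ and $t-t'=e+1$. There are $r$ choices of $(i,i')$.
\end{itemize}
When $r=0$ the second case is empty. Since $\mathcal{T}$ is Sidon, each nonzero value of $t-t'$ is realized by at most one pair. The value $t-t'=0$ is realized by the $|\mathcal{T}|$ pairs $(t,t)$, but it can only arise in the first case with $e=0$. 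That forces $r=d>0$ and $i=i'+d$, so it gives at most $g-d$ representations in total. In every case $d$ has at most $(g-r)+r=g$ representations.

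For the density, $|\cG\cap[0,gN)| = g\,|\mathcal{T}\cap[0,N)|$. Along a sequence of $N$ with $|\mathcal{T}\cap[0,N)|\ge(1/\sqrt2-o(1))\sqrt N$, put $n=gN$. Then
\[ |\cG\cap[0,n)| \ge g\Bigl(\tfrac1{\sqrt2}-o(1)\Bigr)\sqrt{n/g} = \Bigl(\tfrac{1}{\sqrt2}-o(1)\Bigr)\sqrt{gn}. \]

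\emph{Main obstacle.} The real difficulty is the $g=1$ input, Kr\"uckeberg's construction. If the paper wants a self-contained proof, I would rebuild it in stages. At stage $j$ choose a modulus $m_j$, much larger than everything built so far, that supports a Singer or Bose modular Sidon set. Choose a dilation and translation of that set by averaging, so that the few elements creating coincidences with the already-built finite part $F$ are removed. Then keep only the elements lying in an interval near the top of $[0,m_j)$, so that later stages cannot interfere.

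The delicate step is the bookkeeping that gives exactly the constant $1/\sqrt2$. One must check that the collisions with $F$ cost only $o(\sqrt{m_j})$ elements. I would control these by counting bad dilations using the bound $|F|^3=o(m_j)$.
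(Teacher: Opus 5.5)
Your upper bound is correct. The window count works: each element lies in exactly $u$ windows, and each pair at distance $d<u$ lies in exactly $u-d$. Cauchy--Schwarz over the $n+u-1$ windows, the crude bound, and $u=\lfloor n^{3/4}\rfloor$ then give $k\le\sqrt{gn}\,(1+o(1))$. This is a self-contained proof of what the paper simply quotes as Lemma~\ref{lem:finite upper}.

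The lower bound has a fatal gap. The set $\cG=\{gt+i : t\in\mathcal T,\ 0\le i<g\}$ is not a $g$-Golomb ruler for any $g\ge 2$, and the error is in your treatment of $t=t'$. Take $1\le d\le g-1$, so $e=0$ and $r=d$. Each of the infinitely many diagonal pairs $(t,t)$ combines with each of the $g-d$ choices $i=i'+d$ to give the pair $(gt+i'+d,\,gt+i')$. So $d$ has at least $(g-d)\,|\mathcal T|=\infty$ representations, not $g-d$; you counted the choices of $(i,i')$ but forgot to multiply by the number of $t$. Concretely, for $g=2$ the set $\cG$ contains $\{2t,2t+1\}$ for every $t\in\mathcal T$, so the difference $1$ occurs once for every element of $\mathcal T$.

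This cannot be patched within the blow-up idea. Every set of the form $\{Lt+p : t\in\mathcal T,\ p\in P\}$ with $|P|\ge 2$ repeats each difference $p-p'$ once per element of $\mathcal T$. That covers unions of translates and filled-in dilates alike. So the case $g\ge 2$ does not reduce to Kr\"uckeberg's theorem this way, and you need dense $g$-Golomb building blocks directly.

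The paper's construction goes as follows:
\begin{enumerate}
\item It takes the rulers of Lemma~\ref{lem:finite lower}, with $q_i$ elements in an interval of length $m_i=(q_i^2-1)/g$. These have density $\sqrt g$, not $1$.
\item It places them in $[q_i+m_i,\,q_i+2m_i)$ with $q_{i+1}=q_i^3$.
\item It splices them in using Lemma~\ref{lem:splice}. This deletes from the new block only the at most $g\binom{|\cG_i|}{2}=O(q_i^2)$ left endpoints of pairs whose difference already lies in $\cG_i-\cG_i$. No averaging over dilations is needed.
\item The factor $\sqrt2$ comes from placing each block in (essentially) $[m,2m)$.
\end{enumerate}
Your ``main obstacle'' sketch is essentially this staged construction. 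However, it has to be run with $g$-Golomb blocks of density $\sqrt g$ rather than with Sidon blocks that are blown up afterwards.
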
    

Naturally, one asks if these results are close to best possible. Ruzsa~\cite{1998.Ruzsa-a} gives a beautiful construction of a Sidon set with $A(n) = n^{-1+\sqrt2+o(1)}$, and this remains the record for an infinite set. Lemmas~\ref{lem:finite upper} and~\ref{lem:finite lower} below give upper and lower bounds on the size of $g$-Golomb rulers in $[0,N)$.

This is Part I of a series of 3 works by the author. In Part I, we handle infinite $g$-Golomb rulers. In Part II~\cite{2026.Obryant-b}, we handle infinite $B_h$-sets with $h$ even. In Part III~\cite{2026.Obryant-c}, we address infinite $B_h$-sets with odd $h$.

In the next section, we reproduce two results from~\cite{2015.Caicedo&Martos&Trujillo} that we need for this work. In Section~\ref{sec:Proof of liminf}, we give the proof of Theorem~\ref{thm:liminf}. In Section~\ref{sec:Proof of limsup}, we give the proof of Theorem~\ref{thm:limsup}. In Section~\ref{sec:problems}, we list natural open problems suggested by this work.

\section{Literature}\label{sec:literature}
We quote two results from Caicedo \& Martos \& Trujillo~\cite{2015.Caicedo&Martos&Trujillo}.
\begin{lem}[Caicedo \& Martos \& Trujillo~\cite{2015.Caicedo&Martos&Trujillo}]
  \label{lem:finite upper}
  Let $g ,N$ be positive integers. If $\cA$ is a $g$-Golomb ruler with $\max\cA-\min\cA < N$, then 
    \[|\cA| \le (g N)^{1/2}+(g N)^{1/4}+1\le 3\sqrt{g N}.\]
\end{lem}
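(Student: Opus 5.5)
The plan is the classical Erd\H{o}s--Tur\'an window-counting argument, adapted to multiplicity $g$. After translating we may assume $\min\cA=0$, so $\cA\subseteq[0,N)$; write $k=|\cA|$. Fix an integer parameter $1\le m\le N$, to be chosen at the end, and look at the $N+m-1$ windows $I_t=[t,t+m)$ for $t=-m+1,\dots,N-1$. Each $a\in\cA$ lies in exactly $m$ of them, so with $c_t=|\cA\cap I_t|$ we get $\sum_t c_t = km$. By Cauchy--Schwarz,
\[
\sum_t c_t^2 \ge \frac{k^2m^2}{N+m-1}.
\]

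Next we count the ordered pairs $(a,b)$ with $a>b$ that lie together in a common window. Such a pair has difference $d=a-b\in[1,m-1]$ and lies in exactly $m-d$ windows. Hence
\[
\sum_t \binom{c_t}{2}=\sum_{d=1}^{m-1}(m-d)\,r(d),
\]
where $r(d)\le g$ is the number of representations of $d$ as a difference. Therefore
\[
\sum_t \binom{c_t}{2}\le g\sum_{d=1}^{m-1}(m-d)=g\,\frac{m(m-1)}{2}.
\]
Combining this with the first display, using $\sum c_t^2 = 2\sum\binom{c_t}{2}+km$, gives
\[
\frac{k^2m^2}{N+m-1}\le g\,m(m-1)+km,
\]
that is,
\[
k^2\le (N+m-1)\Bigl(g\,\frac{m-1}{m}+\frac km\Bigr).
\]

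It remains to choose $m$. Expanding, $k^2\le gN + g(m-1) + kN/m + k(m-1)/m$. Using the crude bound $k\le N$ is useless here, so we balance the terms $gm$ and $kN/m$ by taking $m\approx (kN/g)^{1/2}$. Since $k\lesssim\sqrt{gN}$, this gives $m\approx N^{3/4}g^{-1/4}$, and both terms are then of order $\sqrt{gkN}\approx g^{3/4}N^{3/4}$. This yields
\[
k^2\le gN+2(gN)^{3/4}+O(k),
\]
so
\[
k\le \sqrt{gN}+(gN)^{1/4}+O(1).
\]
The exact constant $+1$ should follow by carrying out the same optimization carefully. A clean way to do this is to set $X=k-(gN)^{1/4}$ and check that the quadratic inequality forces $X\le\sqrt{gN}+1$, choosing $m=\lceil N^{3/4}g^{-1/4}\rceil$ and handling separately the degenerate case $m>N$, where $N$ is small compared with $g$ and the trivial bound $k\le N\le\sqrt{gN}$ already suffices.

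I expect the main obstacle to be exactly this last bookkeeping step: making the integrality of $m$ and the lower-order terms produce precisely $(gN)^{1/2}+(gN)^{1/4}+1$ rather than a constant multiple of the second-order term. The final inequality $\le 3\sqrt{gN}$ is immediate once the first bound is in hand, because $gN\ge1$ gives $(gN)^{1/4}\le(gN)^{1/2}$ and $1\le(gN)^{1/2}$. If the sharp constant proves fiddly, the weaker $3\sqrt{gN}$ is all that later arguments need, and it follows already from the crude estimate above.
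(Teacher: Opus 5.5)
The paper does not prove this lemma; it quotes it from Caicedo, Martos and Trujillo. The nearest internal argument is the offset-averaged block count behind \eqref{eq:half} in the proof of Theorem~\ref{thm:liminf}, which is the same double count as your windows. Your proof is correct, and the step you flag as the obstacle closes in a few lines.

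Put $s=(gN)^{1/4}$ and $m=\lceil N/s\rceil=\lceil N^{3/4}g^{-1/4}\rceil$. Since $gN\ge1$, we have $N/s\le N$, so $1\le m\le N$ always and your degenerate case is vacuous. The window count never needed $m\le N$ anyway: each element still lies in exactly $m$ windows, and a pair with difference $d<m$ lies in exactly $m-d$ of them. From $k^2m\le(N+m-1)\bigl(g(m-1)+k\bigr)$ you get $k^2\le gN+g(m-1)+kN/m+k$. The bounds $m-1<N/s\le m$ give $g(m-1)<gN/s=s^3$ and $kN/m\le ks$, so
\[
k^2< s^4+s^3+(s+1)k .
\]
Completing the square,
\[
\Bigl(k-\tfrac{s+1}{2}\Bigr)^2< s^4+s^3+\tfrac{(s+1)^2}{4}\le \Bigl(s^2+\tfrac{s+1}{2}\Bigr)^2 ,
\]
hence $k<s^2+s+1=(gN)^{1/2}+(gN)^{1/4}+1$. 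The bound $3\sqrt{gN}$ then follows exactly as you say.

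Two small cautions. First, the balancing heuristic ``since $k\lesssim\sqrt{gN}$'' would be circular as a proof step. It is harmless here only because your final $m$ does not depend on $k$. Second, your fallback claim that $3\sqrt{gN}$ ``follows already from the crude estimate'' is only true once the $O(1)$ and $O(k)$ terms are made explicit, which is what the computation above does.
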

The crude ``$3\sqrt{g N}$'' bound is wasteful, but enough for some of our uses below. Sometimes, we will use Lemma~\ref{lem:finite upper} in the inexplicit form $|\cA| \le \sqrt{g N} + o(\sqrt N)$.

\begin{lem}[Caicedo \& Martos \& Trujillo~\cite{2015.Caicedo&Martos&Trujillo}]
    \label{lem:finite lower}
    Let $g \ge 1$ be an integer. If $q$ is a prime power with $q\equiv 1 \pmod{g }$, then there exists a $g$-Golomb ruler $\cB\subseteq[0,(q^2-1)/g )$ with $|\cB|=q$.
\end{lem}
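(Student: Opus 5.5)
The plan is to adapt the Bose--Chowla construction of Sidon sets. The target is a set $\cB\subseteq[0,(q^2-1)/g)$ with $|\cB|=q$ and at most $g$ representations of each nonzero difference, and it should come from reducing a Bose set modulo the divisor $(q^2-1)/g$ of $q^2-1$. The hypothesis $q\equiv 1\pmod g$ should play no role in the construction itself; it only guarantees that the modulus is an integer, and in fact $g\mid q-1$ already gives $g\mid q^2-1$.

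\textbf{Step 1: the Bose set.} Let $\theta$ be a primitive element of $\mathbb F_{q^2}$, and set
\[B=\{\,b\in[0,q^2-1) : \theta^b-\theta\in\mathbb F_q\,\}.\]
Since $\theta\notin\mathbb F_q$, the $q$ elements $\theta+t$ with $t\in\mathbb F_q$ are nonzero and distinct, so $|B|=q$.

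\textbf{Step 2: $B$ is a Sidon set modulo $q^2-1$.} Suppose $b_1-b_2\equiv b_3-b_4\not\equiv0\pmod{q^2-1}$ with all $b_i\in B$. Then
\[(\theta+t_1)(\theta+t_4)=(\theta+t_2)(\theta+t_3).\]
Subtracting the two sides kills the $\theta^2$ terms and leaves a relation of degree at most $1$ in $\theta$ over $\mathbb F_q$. Because $\theta$ has degree $2$ over $\mathbb F_q$, both coefficients vanish, so $t_1+t_4=t_2+t_3$ and $t_1t_4=t_2t_3$. Hence $\{t_1,t_4\}=\{t_2,t_3\}$. The difference is nonzero, so $b_1\neq b_2$ and therefore $t_1\ne t_2$; this forces $t_1=t_3$ and $t_4=t_2$. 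Thus every nonzero residue $d$ modulo $q^2-1$ equals $b-b'$ for at most one ordered pair $(b,b')\in B^2$.

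\textbf{Step 3: reduce modulo $m=(q^2-1)/g$.} Let $\cB$ be the set of residues of $B$ modulo $m$, placed in $[0,m)$. This is the step I expect to be the main obstacle, and it has two parts.

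First, the reduction must be injective, so that $|\cB|=q$. Suppose $b\equiv b'\pmod m$ with $b\ne b'$. Then $\theta^{b-b'}=\omega$ is a nontrivial $g$-th root of unity, and because $g\mid q-1$ we have $\omega\in\mathbb F_q^*$. The relation $\theta+t=\omega(\theta+t')$ then gives $\omega=1$ by comparing the coefficients of $\theta$, a contradiction.

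Second, $\cB$ must be a $g$-Golomb ruler. Fix $d>0$ with $d=x-y$ for $x,y\in\cB$; then $0<d<m$. Each representation lifts to a pair $(b,b')\in B^2$ with $b-b'\equiv d\pmod m$. Modulo $q^2-1$ this means $b-b'\equiv d+jm$ for some $j\in\{0,\dots,g-1\}$, and $d+jm\not\equiv0$ since $0<d<m$. By Step 2, each $j$ accounts for at most one pair $(b,b')$. By injectivity, distinct integer pairs $(x,y)$ come from distinct pairs $(b,b')$. So $d$ has at most $g$ representations as a difference in $\cB$.

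The only place the congruence condition enters is the injectivity argument. If $\omega$ were not in $\mathbb F_q$, two elements of $B$ could collapse modulo $m$, and the count $|\cB|=q$ would fail.
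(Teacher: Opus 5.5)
The paper gives no proof of this lemma; it quotes it from Caicedo--Martos--Trujillo, so there is no internal argument to compare yours against. Your proof is correct and complete. It is the standard Bose--Chowla argument: take the Sidon set modulo $q^2-1$ and reduce it modulo the divisor $m=(q^2-1)/g$. Each step checks out:
\begin{itemize}
\item the count $|B|=q$;
\item the use of linear independence of $1,\theta$ over $\mathbb{F}_q$;
\item injectivity of the reduction;
\item lifting each difference to one of the $g$ nonzero residues $d+jm$ modulo $q^2-1$.
\end{itemize}

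Your Step 3 count only uses $d\not\equiv 0 \pmod m$. So it actually proves the stronger fact that $\cB$ is a $g$-Golomb ruler \emph{modulo} $m$. This is exactly the property the paper asserts without proof in its remark after Theorem~\ref{thm:limsup}, where it notes that $\cW$ may be translated or dilated modulo $m$.

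One correction to your framing: the opening claim that $q\equiv 1\pmod g$ merely makes $m$ an integer is inaccurate, as your own final paragraph recognizes. The hypothesis is what places the $g$-th roots of unity $\theta^{jm}$ inside $\mathbb{F}_q^*$. Equivalently, it gives $q+1\mid m$, and $\theta^{q+1}$ generates $\mathbb{F}_q^*$. Without it, the reduction can fail to be injective; for example, $q=2$, $g=3$ gives $m=1$.
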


\section{The Proof of Theorem~\ref{thm:liminf}}\label{sec:Proof of liminf}
The general structure of the proof is the same as Erd\H{o}s's for Sidon sets: we break the $g$-Golomb ruler $\cA$ into blocks of length $N$, and will eventually take $N\to\infty$. We consider the energy
    \[ E \coloneqq \sum_\ell \left| \cA \cap [(\ell-1)N,\ell N) \right|^2\]
and get an upper bound from the $g$-Golomb property and a lower bound from Cauchy's Inequality.

We believe that we have fully optimized this argument. Nevertheless, after the proof, we discuss some alternative approaches and make some guesses about where improvements could originate. We apply Lemma~\ref{lem:energy} in this work with $\cB=\cA$ a $g$-Golomb ruler and $c=g/2$. In Part II of this series, we will use Lemma~\ref{lem:energy} with $\cB$ the $k$-fold sum of a $B_{2k}$-set and $c=1/2$. There is a similar argument in Part III, but there $\sqrt{n/\log n}$ is replaced with $\sqrt{n}$.

\begin{lem}\label{lem:energy}
Let $\cB\subseteq\NN$ have counting function $B(n)$.
Let $M=M(N)$ satisfy, as $N\to\infty$,
\begin{enumerate}[label=\textit{(\roman*)},noitemsep]
  \item\label{eng:window} $B((M+1)N)=o(N)$;
  \item\label{eng:ratio} $\log M/\log N =1+o( 1 )$;
  \item\label{eng:head} $B(N)=o(\sqrt{N \log N})$.
\end{enumerate}
Suppose there is a constant $c$ such that there is an
offset $t^\ast=t^\ast(N)\in[0,N)$ whose block counts
  \begin{equation*}\label{hyp:blocks}
    F_\ell \coloneqq B(t^\ast+\ell N)- B(t^\ast+(\ell-1)N)
  \end{equation*}
satisfy (as $N\to\infty$)
  \begin{equation}\label{hyp:energy}
    \sum_{\ell=1}^{M}\binom{F_\ell}{2}\le cN+o(N).
  \end{equation}
Then
  \[\liminf_{m\to\infty}\frac{B(m)}{\sqrt{m/\log m}}\le \sqrt{\frac{8c}{\log 2}}.\]
\end{lem}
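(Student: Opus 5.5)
The plan is to argue by contradiction. Suppose the conclusion fails. Then there are $\alpha>\sqrt{8c/\log 2}$ and $m_0$ with $B(m)\ge \alpha\sqrt{m/\log m}$ for all $m\ge m_0$. I will derive a lower bound for $\sum_{\ell\le M}F_\ell^2$ that contradicts \eqref{hyp:energy}.

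First I convert \eqref{hyp:energy} into a bound on squares. We have $\sum_{\ell\le M}\binom{F_\ell}{2}=\tfrac12\sum F_\ell^2-\tfrac12\sum F_\ell$. Also $\sum_{\ell\le M}F_\ell\le B(t^\ast+MN)\le B((M+1)N)=o(N)$ by \ref{eng:window}. Hence
\[\sum_{\ell=1}^M F_\ell^2\le 2cN+o(N).\]

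For the lower bound I use a weighted Cauchy inequality, $\sum F_\ell^2\ge (\sum w_\ell F_\ell)^2/\sum w_\ell^2$, with the decreasing weights $w_\ell=(\ell\log(\ell N))^{-1/2}$. Set $S_\ell=F_1+\dots+F_\ell=B(t^\ast+\ell N)-B(t^\ast)$. Abel summation gives
\[\sum_{\ell\le M}w_\ell F_\ell=\sum_{\ell<M}(w_\ell-w_{\ell+1})S_\ell+w_MS_M.\]
All coefficients here are nonnegative, so only the assumed lower bound on $B$ is needed. Since $t^\ast+\ell N\ge N\ge m_0$ and $B$ is nondecreasing,
\[S_\ell\ge \alpha\sqrt{\ell N/\log(\ell N)}\,(1+o(1))-B(N).\]
The head contribution is at most $w_1B(N)=B(N)/\sqrt{\log N}=o(\sqrt N)$ by \ref{eng:head}. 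For the main term, the sum $\sum(w_\ell-w_{\ell+1})\sqrt{\ell N/\log(\ell N)}$ is a Riemann–Stieltjes comparison: it equals $\sum_\ell w_\ell\,\Delta\bigl(\sqrt{\ell N/\log \ell N}\bigr)$ up to boundary terms. Using $\frac{d}{du}\sqrt{u/\log u}\sim \frac{1}{2\sqrt{u\log u}}$, this gives
\[\sum w_\ell F_\ell\ \ge\ \frac{\alpha}{2}\sqrt N\,L\,(1+o(1))-o(\sqrt N),\qquad L:=\sum_{\ell\le M}\frac{1}{\ell\log(\ell N)}=\sum_{\ell\le M} w_\ell^2 .\]

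Next I evaluate $L$. Substituting $u=\log \ell$,
\[L=\int_0^{\log M}\frac{du}{u+\log N}+o(1)=\log\frac{\log N+\log M}{\log N}+o(1)\to\log 2\]
by \ref{eng:ratio}. In particular $L$ is bounded away from $0$, so the $o(\sqrt N)$ error terms are negligible. Cauchy's inequality then yields $\sum F_\ell^2\ge \tfrac{\alpha^2}{4}N L(1+o(1))=\tfrac{\alpha^2\log 2}{4}N(1+o(1))$. Comparing with $2cN+o(N)$ and letting $N\to\infty$ gives $\alpha^2\le 8c/\log 2$, which is the desired contradiction.

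The step I expect to be the main obstacle is making the summation-by-parts asymptotics rigorous with only a one-sided bound on $B$. One has to check that the boundary term $w_MS_M$ and the small-$\ell$ range (where $\log(\ell N)\approx\log N$ and discretization errors are relatively largest) lose only $o(\sqrt N)$. One also has to check that the $(1+o(1))$ factors are uniform in $\ell\ge1$; they are, because every argument $t^\ast+\ell N$ is at least $N\to\infty$. If the discrete Abel comparison proves messy, I would first try splitting $[1,M]$ into geometric ranges $(\lambda^{k-1},\lambda^k]$ with $\lambda\downarrow1$, treating $w_\ell$ as constant on each range. The cost is a factor $1+O(\lambda-1)$, which disappears in the limit.
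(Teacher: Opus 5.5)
Your proposal is correct and follows essentially the paper's own argument. It uses the same energy bound $\sum_{\ell\le M}F_\ell^2\le 2cN+o(N)$, the same weights $w_\ell=(\ell\log \ell N)^{-1/2}$ in Cauchy's inequality, and Abel summation so that only the one-sided lower bound on $B$ is needed, with the head term $w_1B(N)=o(\sqrt N)$ coming from \textit{(iii)} and $\sum w_\ell^2\to\log 2$ from \textit{(ii)}. The differences are cosmetic: you argue by contradiction with a fixed $\alpha$ instead of the paper's monotone $\tau_N$, and you compare $w_\ell(v_\ell-v_{\ell-1})$ termwise with $\tfrac{\sqrt N}{2}w_\ell^2$ rather than with the integral $\int W V'$, which is a slightly tidier route to the same lower bound $\tfrac{\log 2}{4}\alpha^2 N$.
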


\begin{proof}
Assume $N\ge 3$, and set $\tau_N,\widetilde{\tau}_N$ as
  \begin{equation}\label{def tau}
    \tau_N \coloneqq \inf_{n\ge N} B(n)\sqrt{\frac{\log n}{n}},
    \qquad 
    \widetilde{\tau}_N \coloneqq \min\{\tau_N , 1+\sqrt{\frac{8c}{\log 2}}\},
  \end{equation}
so that $\tau_N$ is nondecreasing, $\lim_N\tau_N=\liminf_m \frac{B(m)}{\sqrt{m/\log m}}$, and $\widetilde{\tau}_N$ is bounded.
As $\tau_N$ is monotone, it suffices to show $\tau_N\le
\sqrt{8c/\log2}$ for arbitrarily large $N$. 
Given $N$, take $t^\ast$ as provided, and consider the ``energy''
  \[E\coloneqq \sum_{\ell=1}^{M}F_\ell^2
      =2\sum_{\ell=1}^{M}\binom{F_\ell}{2}+\sum_{\ell=1}^{M}F_\ell.\]

The hypothesis~\eqref{hyp:energy} bounds the first sum by $2cN+o(N)$, and the second sum telescopes:
  \[\sum_{\ell=1}^{M}F_\ell=B(t^\ast+\floor{M}N)-B(t^\ast)\le B((M+1)N) = o(N)\]
by~\ref{eng:window}. Hence 
\begin{equation}\label{eq:upper}
  E\le 2cN+o(N).
\end{equation}

The lower bound on $E$ uses Cauchy's Inequality with the weights
\[
   w_\ell\coloneqq \begin{cases}
    \left( \ell\,\log \ell N \right)^{-1/2} & 1\le\ell\le M; \\
    0 & \text{otherwise.}
   \end{cases}
\]
By Cauchy's inequality,
\begin{equation}\label{eq:CS}
   E \coloneqq \sum_{\ell=1}^{M}F_\ell^{2}
   \ge \frac{\bigl(\sum_{\ell=1}^{M}w_\ell F_\ell\bigr)^{2}}
           {\sum_{\ell=1}^{M}w_\ell^{2}} .
\end{equation}
We need an upper bound on the denominator ${\sum_{\ell=1}^{M}w_\ell^{2}}$, and a lower bound on the numerator $\bigl(\sum_{\ell=1}^{M}w_\ell F_\ell\bigr)^{2}$.

\begin{claim}\label{eq:den}
    \(\displaystyle {\sum_{\ell=1}^{M}w_\ell^{2}} \le \log 2 + o(1)\).
\end{claim}
\begin{proof}[Proof of Claim~\ref{eq:den}]
As $x\mapsto\bigl(x\,\log xN \bigr)^{-1}$ is decreasing,
\begin{align*}
   \sum_{\ell=1}^{M}w_\ell^{2}
   &=\sum_{\ell=1}^{M}\frac{1}{\ell\,\log \ell N}\\
   &\le\frac{1}{\log N}+\int_{1}^{M}\frac{dx}{x\,\log xN}\\
   &=\frac{1}{\log N}+\log\frac{\log MN}{\log N}.
\end{align*}
Now, by~\ref{eng:ratio}, the ratio $\log(MN)/\log(N)\to 2$, so that 
  \(   \sum_{\ell=1}^{M}w_\ell^{2}\le o(1)+\log 2.\)
\end{proof}

\begin{claim}\label{eq:num}
    \(\displaystyle \bigg(\sum_{\ell=1}^{M}w_\ell F_\ell\bigg)^{2} \ge \widetilde{\tau}_N^2\left(\frac{\log 2}{2}\right)^2 N+ o(N).\)
\end{claim}
\begin{proof}[Proof of Claim~\ref{eq:num}]
To ease the notation visually, set $\beta_i=B(T+iN)$, and note that $F_\ell = \beta_\ell - \beta_{\ell-1}$. Also, set $M'=\floor{M}$. Rearranging the summation gives
\begin{align*}
   \sum_{\ell=1}^{M'}w_\ell F_\ell
   &=\sum_{\ell=1}^{M'}w_\ell(\beta_\ell-\beta_{\ell-1})\\
   &=\sum_{\ell=1}^{M'-1}\beta_\ell\,(w_\ell-w_{\ell+1})+w_{M'} \beta_{M'}-w_1\beta_0 \\
   &\ge\sum_{\ell=1}^{M'-1}\beta_\ell\,(w_\ell-w_{\ell+1})-w_1\beta_0.
\end{align*}
Now, we have from~\ref{eng:head}
    \[w_1\beta_0=\frac{B(T)}{\sqrt{\log N}}\le \frac{B(N)}{\sqrt{\log n}} = \frac{o(\sqrt{N \log N})}{\sqrt{\log N}} =o(\sqrt N)\]
For $1\le\ell\le M'-1$ we have 
$T+\ell N\ge\ell N\ge N$, so the definitions of $\tau_N,\widetilde{\tau}_N$ on Line \eqref{def tau} yields
  \[\beta_\ell\ge\tau_N \sqrt{\ell N/\log \ell N}  \ge \widetilde{\tau}_N \sqrt{\ell N/\log \ell N} .\] 
Writing 
    \[v_\ell\coloneqq \sqrt{\ell N/\log \ell N}\] 
and using $w_\ell-w_{\ell+1}\ge0$,
\[
   \sum_{\ell=1}^{M'}w_\ell F_\ell
   \ge\widetilde{\tau}_N\sum_{\ell=1}^{M'-1}v_\ell(w_\ell-w_{\ell+1})-o(\sqrt N) .
\]
By rearranging the summation again, with $v_0\coloneqq 0$,
\[
   \sum_{\ell=1}^{M'-1}v_\ell(w_\ell-w_{\ell+1})
   =\sum_{\ell=1}^{M'-1}w_\ell(v_\ell-v_{\ell-1})-w_{{M'}}v_{M' -1}.
\]
Since the $v$ sequence is positive and strictly increasing and the $w$ sequence is positive, we have
    \begin{equation*}
    0 < w_{M'}v_{M'-1} 
    \le w_{M'}v_{M'}
    = \frac{\sqrt{N}}{\log M'N} 
    =o(\sqrt N).
    \end{equation*}

At this point, we have
    \[\sum_{\ell=1}^{M} w_\ell F_\ell \ge o(\sqrt N)+ \widetilde{\tau}_N \sum_{\ell=1}^{M'-1} w_\ell(v_\ell-v_{\ell-1}),\]
where $w_\ell$ and $v_\ell$ are explicit sequences with easily handled properties. Set $W(x)=(x \log xN)^{-1/2}$ and $V(x)=(xN/\log xN)^{1/2}$. For $N\ge e^2$, both $W$ and $V'$ are positive and decreasing on $x\ge 1$. 
For $2\le \ell\le M'-1$, since $V'$ is decreasing and $W(x)\le W(\ell)=w_\ell$ for $x\ge\ell$,
  \[w_\ell(v_\ell - v_{\ell-1}) = w_\ell\int_{\ell-1}^{\ell} V'(x)\,dx
    \ge w_\ell\int_{\ell}^{\ell+1} V'(x)\,dx
    \ge \int_{\ell}^{\ell+1} W(x)V'(x)\,dx.\]
For $\ell=1$ (recalling $v_0=0$), we have $V(2)\le\sqrt2\,V(1)\le 2V(1)$, so that
  \[w_1(v_1-v_0) = W(1)V(1) \ge W(1)\bigl(V(2)-V(1)\bigr) \ge \int_1^2 W(x)V'(x)\,dx.\]
Summing over $1\le\ell\le M'-1$,
\begin{align*}
    \sum_{\ell=1}^{M'-1} w_\ell(v_\ell-v_{\ell-1})
    &\ge \int_1^{M'} W(x) V'(x)\,dx \\
    &=\frac{\sqrt{N}}{2} \left(\log\left(1+\frac{\log M'}{\log N}\right)- \frac{\log M'}{\log(N) \log M'N}\right)
\end{align*}
By~\ref{eng:ratio}, we have $\log M' /\log N = 1+o(1)$ and so
\begin{align*}
  \sum_{\ell=1}^{M'} w_\ell(v_\ell-v_{\ell-1})
    &=\frac{\sqrt{N}}{2} \left(\log(2+o(1))-o(1)\right) \\
    &=\frac{\log 2}{2}\sqrt{N} + o(\sqrt N).
\end{align*}
Thus,
    \begin{align*}
    \left(\sum_{\ell=1}^{M} w_\ell F_\ell\right)^2
    &\ge \left( o(\sqrt{N})+ \widetilde{\tau}_N \left(\frac{\log 2}{2}\sqrt{N} + o(\sqrt N)\right)\right)^2 \\
    &= \widetilde{\tau}_N^2\frac{\log^2 2}{4} N + o(N).
    \end{align*}
Claim~\ref{eq:num} is verified.
\end{proof}

Using Claim~\ref{eq:den} and Claim~\ref{eq:num} in Cauchy's Inequality~\eqref{eq:CS}, we get a lower bound on the energy:
\begin{equation}\label{eq:lower}
   E\ge\frac{\widetilde{\tau}_N^{2}\bigl(\tfrac{\log2}{2}\bigr)^{2}N\,(1+o(1))}
            {(\log2)(1+o(1))}
   =\widetilde{\tau}_N^{2}\,\frac{\log2}{4}\,N+o(N).
\end{equation}
Combine the upper bound on Line~\eqref{eq:upper} and lower bound on Line~\eqref{eq:lower}, take $N\to\infty$, and we have
    \[ \left(\lim_{N\to\infty} \widetilde{\tau}_N\right)^2\frac{\log 2}{4} \le 2c,\]
where the limit exists because $\widetilde{\tau}_N$ is nondecreasing and bounded.
This implies that $\widetilde{\tau}_N\le \sqrt{\frac{8c}{\log 2}}$, so that by definition $\widetilde{\tau}_N=\tau_N$.
The inequality $\tau_N \le \sqrt{\frac{8c}{\log 2}}$ is the conclusion of Lemma~\ref{lem:energy}.
\end{proof}

We are now positioned to prove Theorem~\ref{thm:liminf} quickly.

\begin{proof}
Let $g$ be a positive integer, $\cA$ a $g$-Golomb ruler with counting function $A(n) \coloneqq |\cA \cap[0,n)|$. Set $M \coloneqq N/\log N$. We will appeal to Lemma~\ref{lem:energy} with $\cB=\cA$, and we need to prove that the hypotheses of Lemma~\ref{lem:energy} are satisfied. Using Lemma~\ref{lem:finite upper}, we have
\begin{enumerate}[label=\textit{(\roman*)},noitemsep]
  \item $A((M+1)N) = A(N^2/\log N) \le 3 \sqrt{gN^2/\log N} = o(N)$;
  \item $\frac{\log M}{\log N} = \frac{\log N- \log\log N}{\log N}= 1-o(1)$;
  \item $A(N)\le 3 \sqrt{gN} = o(\sqrt{N \log N})$.
\end{enumerate}

Now, we consider 
    \[F_\ell^{(t)}\coloneqq \left| \cA\cap[\,t+(\ell-1)N,\ t+\ell N)\right|,\] 
where $0\le t < N$ and $1\le \ell \le M$.
Any particular unordered pair $a>b$ in $\binom{\cA}{2}$ with difference $d=a-b$ (and where $1\le d < N$) is usually counted in
$F_\ell^{(t)}$ for $N-d$ different values of $t$, but will lie in fewer if $a<N$, so that
\begin{align}
   \sum_{t=0}^{N-1}\sum_{\ell=1}^{M}\binom{F_\ell^{(t)}}{2}
   &\le \sum_{t=0}^{N-1}\sum_{\ell=1}^{\infty}\binom{F_\ell^{(t)}}{2} \notag \\
   &\le \sum_{\substack{\{a,b\}\subseteq\cA\\1\le a-b < N}} \bigl(N-(a-b)\bigr) \notag\\
   &\le g \sum_{d=1}^{N-1}(N-d) \notag\\
   &=\frac{g}{2} N - \frac g2. \label{eq:half}
\end{align}
Fix an offset $t^\ast$ attaining at most the average, and define
    \[F_\ell\coloneqq F_\ell^{(t^\ast)}.\]

The hypotheses of Lemma~\ref{lem:energy} are satisfied with $c=g/2$, and the conlcusion of that lemma is the conclusion of Theorem~\ref{thm:liminf}.
\end{proof}

\subsection{Nonrigorous thoughts about the proof}
The author has given each of the suggestions below serious thought and effort and has received no benefit from them, but is not convinced that no benefit is possible.

One only needs to take $N$ through a subsequence to $\infty$, and this freedom plays no role in the proof. Perhaps some $N$ allow for an improvement to Inequality~\eqref{eq:half}; perhaps it is possible to also average over some values of $N$. 

The upper bound in Inequality~\eqref{eq:half} is unchanged if one sums $\ell$ to $\infty$. That is, $M$ plays no role here. This suggests that there is a cleaner way to handle the infinite set $\cA$ instead of as a series of truncations $\cA \cap [0,MN) = \cA \cap [0,N^2/\log N)$. To this end, note that if we define energy as
    \[E_b \coloneqq \sum_{\ell=1}^\infty \binom{F_\ell}{2},\]
then $E_b$ is finite, and in fact the upper bound on $E_b$ is $g (N-1)/2$, and is a few lines easier to prove than the upper bound on $E$. The lower bound, however, seems not to benefit at all and that part of the argument demands some truncation anyway.

The transition to $F_\ell$ in the argument is (up to normalization) that of taking the conditional expectation of the indicator function of $\cA$ relative to the $\sigma$-algebra generated by $\{[T+iN,T+(i+1)N): i\ge 0\}$. Perhaps there is some reverse-martingale behind the scenes, and the current work is merely the first step of that martingale.

Related to the last suggestion, there are powerful entropy inequalities that may be relevant, with entropy taking the role played by energy.

The use of Cauchy's Inequality~\eqref{eq:CS} is optimal only if $F_\ell \approx c\cdot w_\ell$ for some constant $c$. The author knows no reason why $F_\ell$ would decay this smoothly. For example, if $F_\ell$ decreases consistently, then the set
    \[\widetilde{\cA}_N \coloneqq (T+MN-\cA)\cap[0,MN)\]
is a $g$-Golomb ruler contained in $[0,MN)$ whose $F_\ell$ sequence is consistently increasing, and $A(T+MN)-A(T)=\widetilde{A}_N(MN)$. However, the $\widetilde{\cA}_N$ set does not have the same lower bound on its infimum, and so it is unclear how to use this to advantage. Perhaps assuming that $F_\ell$ does decrease smoothly allows one to increase $M$ beneficially.

\section{Proof of Theorem~\ref{thm:limsup}}\label{sec:Proof of limsup}

This proof closely follows that given in Halberstam \& Roth~\cite{1966.Halberstam&Roth}, modified to allow $g >1$.

The first sentence of Theorem~\ref{thm:limsup} follows immediately from Lemma~\ref{lem:finite upper}. For the second sentence, we need to construct an infinite $g$-Golomb ruler, which we will do by taking a union of finite rulers, discarding a negligible number of elements at each stage. The next lemma addresses the basic situation of combining two sets.

\begin{lem}\label{lem:splice}
  Let $g \ge 1$, and let $V_2,W_1,m$ be three positive integers with 
  \begin{equation*}
    W_1 - V_2 \ge \max \{V_2, m\}.
  \end{equation*}
  Let $\cV,\cW$ be $g$-Golomb rulers contained in $[0, V_2)$, $[W_1, W_1+m)$, respectively.
  Then there is a subset $\cW^{*} \subseteq \cW$ with
  \begin{equation*}
    |\cW^{*}| \ge |\cW| - g \,\binom{|\cV|}{2}
  \end{equation*}
  such that $\cV \cup \cW^{*}$ is a $g$-Golomb ruler.
\end{lem}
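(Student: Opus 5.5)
We need to understand which differences can be over-represented in $\cV\cup\cW$. A positive difference of two elements of the union comes from one of three sources: a pair inside $\cV$, a pair inside $\cW$, or a cross pair $(w,v)$ with $w\in\cW$, $v\in\cV$.

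The first step is to separate these sources by size. Differences within $\cV$ are less than $V_2$. Differences within $\cW$ are less than $m$. A cross difference satisfies
\[ w-v > W_1 - V_2 \ge \max\{V_2,m\}. \]
So every cross difference is strictly larger than every internal difference. Hence a difference $d$ realized by a cross pair is realized only by cross pairs. Differences realized only by internal pairs are fine, though one check is needed: a value $d$ might arise both inside $\cV$ and inside $\cW$, which could give up to $2g$ representations. I address this below.

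The main step is to control cross differences by deleting elements of $\cW$. For a fixed cross difference $d$, its representations are pairs $(v+d,v)$ with $v\in\cV$ and $v+d\in\cW$. There are at most $|\cV|$ of them, one per $v$, so the count is not automatically at most $g$. I would use the following observation. If two cross pairs share the difference $d$, say $w-v=w'-v'=d$ with $v\neq v'$, then $w-w'=v-v'$, so a difference of $\cV$ reappears as a difference of $\cW$.

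Fix an unordered pair $\{v,v'\}\subseteq\cV$ with $\delta=v-v'>0$. Since $\cW$ is a $g$-Golomb ruler, there are at most $g$ pairs $(w,w')\in\cW^2$ with $w-w'=\delta$. For each such pair, delete $w$ (or $w'$) from $\cW$. The total number of deletions is at most $g\binom{|\cV|}{2}$, which matches the bound in the statement.

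After these deletions, no $\cV$-difference is a $\cW^*$-difference. This settles the worry from the first step, since no $d$ now arises both inside $\cV$ and inside $\cW^*$. It also means that each cross difference $d$ has at most one representation: two representations with $v\ne v'$ would force $w-w'=v-v'$, and $v=v'$ forces $w=w'$. Therefore every $d$ is represented either only internally within one of $\cV,\cW^*$, hence at most $g$ times, or only by cross pairs, hence at most once $\le g$ times. So $\cV\cup\cW^*$ is a $g$-Golomb ruler.

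The main obstacle is bookkeeping. One must confirm that a single deletion rule simultaneously kills both the cross collisions and the coincidences between internal differences of $\cV$ and of $\cW$. The observation that both reduce to a $\cV$-difference appearing in $\cW$ handles this.
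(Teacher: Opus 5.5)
Your proposal is correct and follows essentially the same argument as the paper. You delete one element from each of the at most $g$ pairs of $\cW$ realizing each positive difference of $\cV$. You then use $W_1-V_2\ge\max\{V_2,m\}$ to separate internal differences from cross differences, and observe that a repeated cross difference would force a $\cV$-difference into $\cW^{*}-\cW^{*}$.
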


\begin{proof}
Classify the ordered pairs $(a,b)\in (\cV \cup \cW)^2$ with $a<b$ by the their coordinates:
\begin{description}[noitemsep]
    \item[type $VV$:] $a,b\in\cV$, and so $d=b-a< V_2 \le W_1-V_2$;
    \item[type $VW$:] $a\in\cV,b\in \cW$, and so $d=b-a > W_1-V_2$;
    \item[type $WW$:] $a,b\in\cW$, and so $d=b-a < m \le W_1-V_2$.
\end{description}
The differences $d\in \cV-\cV$ can arise from at most $g$ type $VV$ pairs, as $\cV$ is a $g$-Golomb ruler. Set $WW_d$ to be the left endpoint of each type $WW$ pair with difference $d$, and note that $|WW_d| \le g $ because $\cW$ is a $g$-Golomb ruler, and
    \[R \coloneqq \bigcup_{d\in \cV-\cV} WW_d, \qquad \left| R \right| \le g \cdot |(\cV-\cV)\cap\NN_{\ge 1}| \le g \binom{|\cV|}{2}.\]
Set
  \[\cW^{*}\coloneqq \cW\setminus R.\] 
Further, note now that each $d\in \cV-\cV$ has $d\not\in \cW^{*}-\cW^{*}$ and $d\not\in \cW^{*}-\cV$ because $d< V_2 \le W_1-V_2$. Thus, $d\in \cV-\cV$ has at most $g$ representations as a difference of elements in $\cV\cup \cW^{*}$.

Any difference $d$ that is at most $W_1-V_2$ can only arise from type $VV$ and type $WW$, and so can only arise from at most $g$ pairs of $\cV\cup\cW^{*}$. 

It remains to consider differences $d> W_1-V_2$. These can only arise from type $VW$ pairs. But if
\begin{equation*}
d=w - v = w' - v', \qquad w> w',v < v',\qquad w,w' \in \cW^{*},v,v' \in \cV,
\end{equation*}
then, $w - w' = v - v'$. But from the above construction, no positive difference in $\cV-\cV$ is in $\cW^{*}-\cW^{*}$. Thus, no such difference occurs more than once.
\end{proof}

\begin{proof}[Proof of Theorem~\ref{thm:limsup}]
Let $q\ge 3$ be a prime power with $q\equiv 1 \pmod{g }$. Set $q_1=q$ and $q_{i+1} = q_i^3$, so that each $q_i$ is a prime power with $q_i \equiv 1 \pmod{g }$. Set
\begin{equation*}
  m_i = \frac{q_i^2 - 1}{g }.
\end{equation*}
By Lemma~\ref{lem:finite lower}, we can choose a $g$-Golomb ruler $\cB_i \subseteq
[q_i+m_i, q_i+2m_i)$ with $|\cB_i| = q_i$ and, so that
\begin{equation*}
  \frac{|\cB_i|}{\sqrt{m_i}}
    = \frac{q_i}{\sqrt{(q_i^2 - 1)/g }}
    \longrightarrow \sqrt{g }.
\end{equation*}

We build $\cG$ as an increasing union $\cG \coloneqq \bigcup_i \cG_i$, with $\cG_i$ to be defined inductively.
Set $\cG_1 = \cB_1 \subseteq [q_1+m_1, q_1+2m_1)$. For $i \ge 1$,
suppose $\cG_i$ has been constructed and is a $g$-Golomb ruler
contained in $[0, q_i+ 2m_i)$. Put
\begin{align*}
  \cV &= \cG_i &\subseteq &[0,q_i+ 2m_i), \\
  \cW &= \cB_{i+1} &\subseteq &[q_{i+1}+m_{i+1}, q_{i+1}+2m_{i+1}).
\end{align*}
With 
  \[ V_2 \coloneqq q_i+2m_i,\quad  W_1 \coloneqq q_{i+1}+m_{i+1}, \quad m \coloneqq m_{i+1},\]
The inequalities of Lemma~\ref{lem:splice} hold as 
\begin{align*}
  W_1 - V_2 &=q_{i+1}+m_{i+1}-q_i-2m_{i} =q_i^3+ \frac{q_i^6-1}{g }-q_i -2\frac{q_i^2-1}{g }\\
  V_2 &\coloneqq q_i+2m_i = q_i + 2\frac{q_i^2-1}{g }\\
  m &\coloneqq m_{i+1} = \frac{q_i^6-1}{g },
\end{align*}
and so (using $q_i \ge q \ge 3$ and $g \ge 1$)
  \begin{align*}
     (W_1-V_2)-(V_2) 
     &= q_i^3+ \frac{q_i^6-1}{g }-2q_i -4\frac{q_i^2-1}{g } \\
     &\ge q_i^3+ q_i^6-1-2q_i -4q_i^2-4 \\
     &> 0
  \end{align*}
and
  \begin{align*}
    (W_1-V_2)-(m)
    &=q_i^3+ \frac{q_i^6-1}{g }-q_i -2\frac{q_i^2-1}{g }-\frac{q_i^6-1}{g } \\
    &=  q_i^3-q_i -2\frac{q_i^2-1}{g } \\
    &\ge  q_i^3-q_i -2(q_i^2-1) \\
    &=(q_i-2)(q_i-1)(q_i+1) \\
    &>0.
  \end{align*}
Apply Lemma~\ref{lem:splice} to obtain $\cW^{*} \subseteq \cW$ with
$\cV \cup \cW^{*}$ a $g$-Golomb ruler, and set
\begin{equation*}
  \cG_{i+1} \coloneqq \cG_i \cup \cW^{*} 
    \subseteq [0,\, q_{i+1}+2m_{i+1}).
\end{equation*}
Each $\cG_{i+1}$ is a $g$-Golomb ruler, and the union
  \[\cG \coloneqq \bigcup_{i=1}^\infty \cG_i\] 
is a $g$-Golomb ruler because any violating configuration involves finitely many elements and hence lies in some $\cG_{i+1}$.

We now consider the size of $\cG_{i+1}$. Clearly $\cG_{i+1} = \cG_i \cup W^{*}$, so that 
  \begin{align*}
    |\cG_{i+1} | 
      &= |\cG_i| + |\cB_{i+1}| - |\cB_{i+1} \setminus \cW^{*}| \\
      &\ge q_{i+1} -g \binom{|\cV|}{2} \\
      &= q_i^3 - g \binom{|\cG_i|}{2}.
  \end{align*}
Since $\cG_i$ is a $g$-Golomb ruler contained in $[0,q_i+2m_i)$, we know that $|\cG_i| \le 3\sqrt{q_i+2m_i}$, so that
  \[g \binom{|\cG_i|}{2} =O( q_i^2).\]
Thus, \( |\cG_{i+1}| \ge q_i^3- O(q_i^2). \)

We are now ready to finish the proof:
\begin{align*}
  \limsup_{n\to \infty} \frac{|\cG \cap [0,n)|}{\sqrt{n}}
  &\ge \lim_{i\to \infty} \frac{|\cG \cap [0,q_{i+1}+2m_{i+1})|}{\sqrt{q_{i+1}+2m_{i+1}}} \\
  &= \lim_{i\to\infty} \frac{ |\cG_{i+1}| }{\sqrt{q_i^3+2(q_i^6-1)/g }} \\
  &= \lim_{i\to\infty} \frac{q_i^3 - O(q_i^2)}{\sqrt{2/g } \, q_i^3+O(1)} \\
  &= \frac{\sqrt g }{\sqrt 2}. \qedhere
\end{align*}
\end{proof}

\subsection{Nonrigorous thoughts about the proof}
Lemma~\ref{lem:splice} is only used in the circumstance where $\cW$ is a $g$-Golomb ruler modulo $m$ (not merely a $g$-Golomb ruler in the integers). We thus have the option of translating and dilating $\cW$ modulo $m$ before projecting into $[W_1,W_1+m)$. The $2$ in the ``$\sqrt{2}$'' in the statement of Theorem~\ref{thm:limsup} is the same as the $2$ in our projection of $\cW$ into (essentially) $[m,2m)$. The author has expended considerable effort trying to use rotations of $\cW$ to allow a projection into $[0.99m,1.99m)$, and is now convinced that this does not work. The essential obstruction for $g =1$ is that dense Sidon sets are uniformly distributed, so that every rotation of $\cW$ has about $1\%$ of its elements in the interval $[0.99m,m)$, and this interacts with $\cV$ and $\cW\cap[1.97m,1.99m)$ enough to ruin the approach. The author has not considered dilations carefully, but they have the same fundamental obstruction described above.

\section{Further problems}\label{sec:problems}

The first problem suggested by this work is improving the constants in Theorem~\ref{thm:liminf} and~\ref{thm:limsup}, and the second problem is to provide bounds on how much the constants can be improved.

Suppose that $\cA$ is a Sidon set contained in $[0,NM)$ with $|\cA|\approx \sqrt{MN}$. What are the possible values of
  \[ \min \left\{\frac{A(N)}{f(N)}, \ldots, \frac{A(\ell N)}{f(\ell N)},\dots, \frac{A(MN)}{f(MN)}\right\}\]
for various functions $f$ and parameters $M$? Fine-distribution results of this nature would be helpful in some applications.

We are aware of no infinite construction of $g$-Golomb rulers other than the greedy construction. Can Ruzsa's construction~\cite{1998.Ruzsa-a} of a ``dense'' infinite $1$-Golomb ruler be extended to $g>1$?

\section*{Tool and computational resource disclosure}
This work was developed in interaction with Anthropic's \emph{ClaudeAI}, which was helpful in some ways and an incredible time-sink in others. Algebra was checked with Wolfram's \emph{Mathematica 14.3}. Lamport's \LaTeX\ was used both for typesetting and interacting with \emph{Claude}. Harmonic's \emph{AristotleAI} located a half-dozen typos and small errors (all now removed). Finally, as this work is Part I of a series of 3 papers, a substantial refactoring across the 3 works was need, and was conducted entirely by the human involved.

\end{document}